\documentclass[a4paper]{amsart}
\usepackage[all]{xy}
\usepackage{amssymb,mathrsfs,enumitem,amsmath,bbold}

\usepackage[in]{fullpage}
\usepackage{color}
\definecolor{Chocolat}{rgb}{0.36, 0.2, 0.09}
\definecolor{BleuTresFonce}{rgb}{0.215, 0.215, 0.36}
\definecolor{EgyptianBlue}{rgb}{0.06, 0.2, 0.65}

\usepackage[colorlinks,final,hyperindex]{hyperref}
\hypersetup{citecolor=BleuTresFonce, urlcolor=EgyptianBlue, linkcolor=Chocolat}

\usepackage{fourier}

\newcommand{\llbincomb}[7]{\ensuremath{\!\!\!\!
 \vcenter{\hbox{\xymatrix@R=.4pc@C=.2pc{ 
 			#4\ar@{-}[dr] && #5\ar@{-}[dl]&&\\
 			&*+[o][F-]{#3}\ar@{-}[dr]&&#6\ar@{-}[dl] \\
 			&&*+[o][F-]{#2}\ar@{-}[dr] &   &#7\ar@{-}[dl]\\
 			&&&*+[o][F-]{#1}\ar@{-}[d]&\\
 			&&&*{}&
 		}}}}}


\newcommand{\lrbincomb}[7]{\!\!\!\ensuremath{
 \vcenter{\hbox{\xymatrix@R=.4pc@C=.2pc{ 
 			&#5\ar@{-}[dr] && #6\ar@{-}[dl]&\\
 			#4\ar@{-}[dr]&&*+[o][F-]{#3}\ar@{-}[dl] \\
 			&*+[o][F-]{#2}\ar@{-}[dr] &   &#7\ar@{-}[dl]\\
 			&&*+[o][F-]{#1}\ar@{-}[d]&\\
 			&&*{}&
 		}}}}\!\!\!\!}


\newcommand{\rlbincomb}[7]{\ensuremath{
\vcenter{\hbox{\xymatrix@R=.4pc@C=.2pc{ 
			#5\ar@{-}[dr] && #6\ar@{-}[dl]&\\
			&*+[o][F-]{#3}\ar@{-}[dr]& & #7\ar@{-}[dl]\\
           #4\ar@{-}[dr] &   &*+[o][F-]{#2}\ar@{-}[dl]&\\
			&*+[o][F-]{#1}\ar@{-}[d]&&\\
			&*{}&&&
}}}}\!\!\!\!\!\!\!}
				
\newcommand{\rrbincomb}[7]{\ensuremath{
\vcenter{\hbox{\xymatrix@R=.4pc@C=.2pc{ 
			&&#6\ar@{-}[dr] && #7\ar@{-}[dl]\\
			&#5\ar@{-}[dr]& & *+[o][F-]{#3}\ar@{-}[dl]&\\
			#4\ar@{-}[dr] &   &*+[o][F-]{#2}\ar@{-}[dl]&&\\
			&*+[o][F-]{#1}\ar@{-}[d]&&&\\
			&*{}&&&&
}}}}\!\!\!\!\!\!\!\!\!\!}

\catcode`\@=11

\catcode`\@=13

\newtheorem{theorem}{Theorem}

\newtheorem{lemma}[theorem]{Lemma}
\newtheorem{proposition}[theorem]{Proposition}

\theoremstyle{definition}

\newtheorem{remark}[theorem]{Remark}
\newtheorem{definition}[theorem]{Definition}

\DeclareMathAlphabet{\pazocal}{OMS}{zplm}{m}{n}

\def\calE{\pazocal{E}}

\def\calO{\pazocal{O}}
\def\calP{\pazocal{P}}
\def\calQ{\pazocal{Q}}
\def\calR{\pazocal{R}}
\def\calS{\pazocal{S}}
\def\calT{\pazocal{T}}

\def\calX{\pazocal{X}}
\def\calY{\pazocal{Y}}

\DeclareMathOperator{\id}{id}

\DeclareMathOperator{\Ass}{\textsl{Ass}}
\DeclareMathOperator{\Poisson}{\textsl{Poisson}}
\DeclareMathOperator{\Com}{\textsl{Com}}

\DeclareMathOperator{\PL}{\textsl{PreLie}}

\DeclareMathOperator{\FM}{\textsl{FMan}}
\DeclareMathOperator{\Lie}{\textsl{Lie}}

\DeclareMathOperator{\gr}{gr}

\DeclareMathAlphabet{\mathbbold}{U}{bbold}{m}{n}

\def\k{\mathbbold{k}}

\newcommand{\ac}{\scriptstyle \text{\rm !`}}

\begin{document}

\title{Algebraic structures of $F$-manifolds via pre-Lie algebras}

\author{Vladimir Dotsenko}
\address{School of Mathematics, Trinity College, Dublin 2, Ireland, and Departamento de Matem\'aticas, CINVESTAV-IPN, Av. Instituto Polit\'ecnico Nacional 2508, Col. San Pedro Zacatenco, M\'exico, D.F., CP 07360, Mexico}
\email{vdots@maths.tcd.ie}

\dedicatory{To Yuri Ivanovich Manin,\\ with my deepest respect and admiration}

\subjclass[2010]{18D50 (Primary), 18G55, 53D45, 68Q42 (Secondary)}

\begin{abstract}
We relate the operad $\FM$ controlling the algebraic structure on the tangent sheaf of an $F$-manifold (weak Frobenius manifold) defined by Hertling and Manin to the operad $\PL$ of pre-Lie algebras: for the filtration of $\PL$ by powers of the ideal generated by the Lie bracket, the associated graded object is~$\FM$. 
\end{abstract}

\maketitle

\section*{Introduction} The notion of an $F$-manifold (weak Frobenius manifold) was introduced by Hertling and Manin \cite{HeMa} as a relaxation of the notion of a Frobenius manifold. By definition, an $F$-manifold is a pair $(M,-\circ-)$ consisting of a smooth supermanifold $M$ and a smooth bilinear commutative associative product $-\circ-$ on the tangent sheaf~$TM$ satisfying the condition 
 \[
P_{X_1\circ X_2}(X_3,X_4)=X_1\circ P_{X_2}(X_3,X_4) + (-1)^{|X_1||X_2|}X_2\circ P_{X_1}(X_3,X_4) ,  
 \]
where $P_{X_1}(X_2,X_3)=[X_1,X_2\circ X_3]-[X_1,X_2]\circ X_3-(-1)^{|X_1||X_2|}X_2\circ[X_1,X_3]$ measures to what extent the product~$-\circ-$ and the usual Lie bracket of vector fields fail the Poisson algebra axioms. Any Frobenius manifold is an $F$-manifold; any $F$-manifold for which the product $-\circ-$ is semisimple can be made into a Frobenius manifold~\cite{HeMa}. A very deep operadic result on $F$-manifolds was established by Merkulov~\cite{Merk03}: any homotopy Gerstenhaber algebra gives rise to an $F$-manifold (in fact to a much richer structure including higher operations). However, a very fundamental problem of a reasonable description of the operad $\FM$ encoding ``$F$-manifold algebras'', that is algebras with an associative commutative product and a Lie bracket satisfying the Hertling--Manin condition, has never been addressed. On the one hand, it is not surprising, since the Hertling--Manin condition is a cubic relation, and as such the operad $\FM$ is completely outside the scope of applicability of traditional methods of the operad theory like the Koszul duality theory, so there are no readily available methods to study it, as remarked by Manin in \cite[Sec.~3.3, Remark]{Manin17} where the question of studying the operad $\FM$ is emphasised. On the other hand, this algebraic structure appears as fundamental as the one controlled by the Poisson operad, and its satisfactory algebraic description is highly desirable.

In this paper, we give a very direct description of the operad $\FM$. Quite surprisingly, this operad turns out to be intimately related to one of the most famous operads appearing in the literature, the operad $\PL$ of pre-Lie algebras \cite{CL} that controls bilinear products $-\cdot-$ on a graded vector space $V$ satisfying the condition
\begin{equation}\label{eq:pre-Lie}
(X_1\cdot X_2)\cdot X_3 - X_1\cdot (X_2\cdot X_3) = (-1)^{|X_2||X_3|}((X_1\cdot X_3)\cdot X_2- X_1\cdot (X_3\cdot X_2)) .
\end{equation}
This condition implies that the bracket $[X_1,X_2]=X_1\cdot X_2-(-1)^{|X_1||X_2|}X_2\cdot X_1$ satisfies the Jacobi identity. 
The slogan promoted by this note is ``$F$-manifold algebras are the same to pre-Lie algebras as Poisson algebras to associative algebras.'' (The word ``algebras'' is used in that slogan four times to emphasize that we consider \emph{algebraic} properties of the structure on the tangent sheaf of an $F$-manifold, and ignore the geometry of the arguments, the vector fields.) 

In fact, there exists a commutative diagram of operads
 \[
 \xymatrix{
\PL\ar@{->>}[rr] \ar@{->}[d] & & \Ass  \ar@{->}[d] \\ 
 \FM\ar@{->>}[rr]  & &\Poisson      
 }
 \]
where the vertical arrows are isomorphisms on the level of $\mathbb{S}$-modules (but not on the level of operads). Note that in contrast with the operad $\FM$ the defining relations of the operad $\PL$ are quadratic, and the symmetrised pre-Lie product $X_1\circ X_2=X_1\cdot X_2+(-1)^{|X_1||X_2|}X_2\cdot X_1$ does not satisfy any identities at all, as established in~\cite{BL}.  Our approach substantially relies on shuffle operads and	rewriting methods for operads. We introduce and use two new general notions: an almost composite product and an almost distributive law.

\subsection*{Acknowledgements} I am indebted to Yuri Ivanovich Manin for inspiring conversations about $F$-manifolds. Thanks are also due to Sergei Merkulov and Bruno Vallette for useful comments on a draft version of this paper.  

\subsection*{Conventions} This is a short note, and we do not intend to overload it with excessive recollections. For relevant information on symmetric operads and Koszul duality, we refer the reader to the monograph \cite{LV}, and for information on shuffle operads, Gr\"obner bases and rewriting systems to the monograph \cite{BrDo}.  

All operads in this paper are defined over a field $\k$ of characteristic zero. We assume all operads reduced ($\calP(0)=0$) and connected ($\calP(1)=\k$). The (co)augmentation (co)ideal of an (co)operad $\calP$ is denoted $\overline{\calP}$. The Koszul dual of a quadratic operad $\calP$ is denoted $\calP^{\ac}$. When writing down elements of operads, we use arguments $a_1$, \ldots, $a_n$ as placeholders, and capital Latin letters $X_1$, \ldots, $X_n$ as arguments (belonging to actual graded vector spaces on which operads act). Thus, the pre-Lie identity in the operadic form is
\begin{equation}\label{eq:pre-Lie-op}
(a_1\cdot a_2)\cdot a_3 - a_1\cdot (a_2\cdot a_3) = (a_1\cdot a_3)\cdot a_2- a_1\cdot (a_3\cdot a_2) ,
\end{equation}
and the signs in \eqref{eq:pre-Lie} arise from applying this to a decomposable tensor $X_1\otimes X_2\otimes X_3$ and using the standard Koszul sign rule. 

\section*{The Lie filtration and $F$-manifold algebras} 

We begin with making the statements from the introduction completely clear. Let us begin with a precise operadic definition of the protagonist of this paper.

\begin{definition}
The operad $\FM$ of \emph{$F$-manifold algebras} is generated by a symmetric binary operation $-\circ-$ and a skew-symmetric binary operation $[-,-]$ satisfying the associativity relation and the Jacobi identity 
\begin{gather*}
(a_1\circ a_2)\circ a_3=a_1\circ(a_2\circ a_3),\\
[[a_1,a_2], a_3]+[[a_2,a_3], a_1]+[[a_3,a_1],a_2]=0,
\end{gather*}
and related to each other by the Hertling--Manin relation
\begin{multline*}
[a_1\circ a_2,a_3\circ a_4]=
[a_1\circ a_2, a_3]\circ a_4+[a_1\circ a_2, a_4]\circ a_3+a_1\circ [a_2, a_3\circ a_4]+a_2\circ [a_1, a_3\circ a_4]-\\
(a_1\circ a_3)\circ[a_2,a_4]-(a_2\circ a_3)\circ[a_1,a_4]-(a_2\circ a_4)\circ[a_1,a_3]-(a_1\circ a_4)\circ[a_2,a_3] .
\end{multline*}
\end{definition}

Our next step is to define the filtration of the pre-Lie operad which we shall need.

\begin{definition}
The Lie filtration $F^\bullet\PL$ of the operad of pre-Lie algebras is defined as the filtration by powers of the ideal generated by the Lie bracket $[-,-]$. In other words, $F^k\PL$ is the span of tree tensors in which at least $k$ vertices are labelled with~$[-,-]$.
\end{definition}

Now everything is prepared for our main result to be stated.

\begin{theorem}\label{th:main}
The operad $\FM$ is the associated graded object for the Lie filtration: we have an operad isomorphism 
 \[
\gr_{F}\PL\cong\FM . 
 \]
In particular, \[\dim\FM(n)=n^{n-1},\] 
and the $S_n$-module $\FM(n)$ is isomorphic to the module of rooted trees on $\{1,\ldots,n\}$.
\end{theorem}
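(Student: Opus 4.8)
The plan is to compare $\PL$, $\FM$, and the candidate associated graded object through an explicit monomial basis coming from a Gröbner basis for the shuffle operad $\PL^{\mathrm f}$. First I would recall (or set up) the rewriting system for pre-Lie algebras: in the free shuffle operad on one binary generator $a_1\cdot a_2$, the pre-Lie relation, suitably oriented, has a quadratic Gröbner basis whose normal monomials are exactly the \emph{rooted-tree monomials} counted by $n^{n-1}$. This is the standard fact underlying $\dim\PL(n)=n^{n-1}$, and it is where I would lean on the monograph \cite{BrDo}. The Lie filtration $F^\bullet\PL$ is then a filtration of this monomial basis: rewriting the symmetrised product $a_1\circ a_2$ and the bracket $[a_1,a_2]=a_1\cdot a_2-(-1)^{|a_1||a_2|}a_2\cdot a_1$ into pre-Lie monomials, and declaring a monomial in the $\circ$'s and $[-,-]$'s to sit in $F^k$ when it uses at least $k$ brackets, induces on $\gr_F\PL$ a presentation by generators $-\circ-$ and $[-,-]$. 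One checks directly that the Hertling--Manin relation and the associativity and Jacobi relations hold in $\gr_F\PL$ — for associativity and Jacobi this is immediate since they already hold in $\PL$ after passing to $-\circ-$ and $[-,-]$; for Hertling--Manin one uses that $P_{X}$ is, in a pre-Lie algebra, expressible through the pre-Lie product in a way that makes the cubic Hertling--Manin expression lie in $F^3$, hence vanish in the degree-$2$-in-brackets component where the relation lives. This produces a surjection $\FM\twoheadrightarrow\gr_F\PL$.

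The key steps, in order. (1) Fix the quadratic Gröbner basis of $\PL^{\mathrm f}$ and the resulting rooted-tree normal form; record that the generating functions match, $\dim\PL(n)=n^{n-1}$. (2) Show the associated graded operad $\gr_F\PL$ is generated by the classes of $-\circ-$ and $[-,-]$, and that these satisfy associativity, Jacobi, and Hertling--Manin, giving $\FM\twoheadrightarrow\gr_F\PL$. (3) Conversely, build a monomial spanning set for $\FM(n)$: using associativity one can write any element as iterated $\circ$-products of bracket-monomials, and using Jacobi plus the Hertling--Manin relation to move brackets ``outward'' past $\circ$'s, one bounds $\dim\FM(n)$ from above — the target bound being again the number of rooted trees on $n$ vertices. (4) Combine: $\dim\FM(n)\le n^{n-1}=\dim\PL(n)=\dim\gr_F\PL(n)$ together with the surjection from (2) forces all inequalities to be equalities and the surjection to be an isomorphism. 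The $S_n$-module statement then follows because the rooted-tree basis of $\PL(n)$ is $S_n$-equivariant and compatible with the filtration, so $\gr_F\PL(n)\cong\PL(n)$ as $S_n$-modules, which is the permutation module on rooted trees.

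The main obstacle is step (3): getting a tight upper bound $\dim\FM(n)\le n^{n-1}$ purely from the three defining relations of $\FM$, without already knowing the answer. Naively reducing $\FM$-monomials produces far too many, and the Hertling--Manin relation — being cubic and not a rewriting rule in any obvious term order — does not by itself give a terminating, confluent system. This is exactly the point where I expect the paper's new machinery (the \emph{almost composite product} and \emph{almost distributive law} promised in the introduction) to do the real work: rather than fighting with a direct Gröbner basis for $\FM$, one realises $\FM$ as built from $\Com$ and $\Lie$ via an almost distributive law whose ``associated graded'' is the honest distributive law producing $\Poisson$, transporting the known dimension count. Concretely, I would (i) prove that $\PL$ decomposes, as an $\mathbb S$-module, via an almost composite product $\Com\circ\Lie$ mirroring the PBW decomposition $\Ass\cong\Com\circ\Lie$; (ii) show this $\mathbb S$-module splitting is exactly the Lie filtration; (iii) conclude $\gr_F\PL\cong\Com\circ\Lie$ as $\mathbb S$-modules with the induced operad structure being precisely the one presented by associativity, Jacobi, and Hertling--Manin, i.e. $\FM$. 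Everything else — the commuting square with $\Ass$ and $\Poisson$, and the dimension and $S_n$-module corollaries — is then formal.
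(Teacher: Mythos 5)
Your overall architecture is the right one and matches the paper's: establish a surjection $\FM\twoheadrightarrow\gr_{F}\PL$ by verifying the $\FM$-relations on the cosets of $-\circ-$ and $[-,-]$ (your step (2) is exactly Lemma~\ref{lm:LowerBound}, which the paper proves by an explicit S-polynomial computation in the shuffle operad; your ``$P_X$ lies in $F^3$'' heuristic would still need that computation), and then close a sandwich with an upper bound $\dim\FM(n)\le n^{n-1}$. The genuine gap is in the upper bound, and it is not merely presentational: your step (3) explicitly defers ``the real work'' to machinery you do not supply, and the concrete plan you sketch in its place is dimensionally impossible. You propose to show that $\PL$ decomposes as an $\mathbb{S}$-module via $\Com\circ\Lie$, ``mirroring the PBW decomposition $\Ass\cong\Com\circ\Lie$,'' and to conclude $\gr_F\PL\cong\Com\circ\Lie$ as $\mathbb{S}$-modules. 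But $\dim(\Com\circ\Lie)(n)=n!$ (that is precisely the PBW count underlying $\Ass$ and $\Poisson$), whereas $\dim\PL(n)=n^{n-1}$; already for $n=3$ one has $6\ne 9$. The almost composite product $\Com\triangledown_0\Lie$ is \emph{not} the plethystic composite $\Com\circ\Lie$: the paper's Remark stresses that the honest composite product underlies the \emph{trivial distributive law} $\calP\vee_0\calQ$, where only one input of $\alpha\in\calY$ is occupied by a generator of $\calX$, while $\calP\triangledown_0\calQ$ only kills $\alpha(\beta_1,\ldots,\beta_k)$ with \emph{all} inputs so occupied and is therefore a strictly larger, non-quadratically presented operad. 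Consequently there is no ``known dimension count'' to transport from the Poisson case.

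What actually fills the gap in the paper is a two-part argument you would need to reconstruct. First, Proposition~\ref{lm:ZeroBound}: a rewriting-system (Knuth--Bendix) argument, using a non-monomial well-order built from the number of product-above-bracket vertex pairs, shows that for any operad $\calO$ obtained by rewriting each $\alpha(\beta_1,\ldots,\beta_k)$ into the right ideal generated by $\calX$ one gets surjections $(\calP\triangledown_0\calQ)(n)\twoheadrightarrow\calO(n)$; the operad $\FM$ is of this form, the Hertling--Manin relation rewriting $[a_1\circ a_2,a_3\circ a_4]$. This is the rigorous version of your ``move the brackets past the $\circ$'s'' heuristic, and the reason ordinary Gr\"obner bases do not suffice is exactly the one you noticed: no admissible monomial order makes $[a_1\circ a_2,a_3\circ a_4]$ the leading term. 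Second, Proposition~\ref{lm:PQresol}: an explicit minimal model of $\calP\triangledown_0\calQ$ with generators $s^{-1}\overline{\calQ^{\ac}}\oplus s^{-1}\overline{\calP^{\ac}}\oplus(\overline{\calQ^{\ac}}\circ(s^{-1}\overline{\calP^{\ac}}))$, from which the Markl--Remm compositional-inverse formula yields $f_{\Com\triangledown_0\Lie}(t)\exp(-f_{\Com\triangledown_0\Lie}(t))=t$, i.e.\ $\dim(\Com\triangledown_0\Lie)(n)=n^{n-1}$. Without a substitute for these two steps your proposal establishes only the surjection of step (2), not the isomorphism; steps (1) and (4) and the $S_n$-equivariance remark are fine as stated.
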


Note that the same filtration can be defined for the operad $\Ass$ of associative algebras. In that case, it is well known~\cite{LL,MaRe} that $\gr_F\Ass\cong\Poisson$. To prove it, one may first note that a direct computation demonstrates that relations of $\Poisson$ hold in $\gr_F\Ass$, which leads to a surjective map between these operads. Establishing that this map is an isomorphism can then be done by proving that this operads are of the same size, which requires some extra work. It turns out that a similar strategy, albeit more involved in the ``extra work'' part, is available in our case. We shall make the first step and establish a lower bound on the operad $\FM$ (\emph{a posteriori}, this bound will turn out to be sharp), and then develop a general formalism of almost composite products and almost distributive laws needed to complete the proof (note that in the case of the Poisson operad, we are dealing with actual composite products and distributive laws, so the parallelism of proofs is very apparent).

\begin{lemma}\label{lm:LowerBound}
We have a surjection of operads 
 \[
\FM\twoheadrightarrow\gr_{F}\PL . 
 \]
\end{lemma}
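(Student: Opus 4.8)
## Proof plan for Lemma \ref{lm:LowerBound}

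The strategy is dictated by the presentation of $\FM$: it suffices to produce in $\gr_F\PL$ a symmetric binary operation $\bar\mu$ and a skew-symmetric binary operation $\bar\lambda$ which generate $\gr_F\PL$ and which satisfy the associativity relation, the Jacobi identity and the Hertling--Manin relation. I would take $\bar\mu\in\gr^0\PL$ and $\bar\lambda\in\gr^1\PL$ to be the classes of the symmetrised product $a_1\circ a_2=a_1\cdot a_2+a_2\cdot a_1$ and of the commutator $[a_1,a_2]=a_1\cdot a_2-a_2\cdot a_1$, respectively. That these generate is immediate from the definition of the Lie filtration: $\PL$ is generated by $\cdot=\tfrac12(\circ+[-,-])$, and $F^k\PL/F^{k+1}\PL$ is spanned by the classes of tree tensors in $\circ$ and $[-,-]$ with exactly $k$ vertices labelled $[-,-]$, each such class being the corresponding operadic composite of copies of $\bar\mu$ and $\bar\lambda$ in $\gr_F\PL$. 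Hence it is enough to check that the three defining relations of $\FM$ become valid after the substitution $\circ\mapsto\bar\mu$, $[-,-]\mapsto\bar\lambda$.

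Two of the three relations are essentially free. Symmetry of $\bar\mu$ and skew-symmetry of $\bar\lambda$ hold already for $\circ$ and $[-,-]$ in $\PL$. The piece $\gr^0\PL$ is the quotient $\PL/I$ by the ideal $I$ generated by $[-,-]$; in this quotient the pre-Lie product becomes commutative, and a commutative (right) pre-Lie algebra is associative --- commutativity turns the pre-Lie identity into $(xy)z=(xz)y$, whence $(xy)z=(yx)z=(yz)x=x(yz)$. Thus the $\circ$-associator lies in $I=F^1\PL$ and its class in $\gr^0\PL$ vanishes (in fact $\gr^0\PL\cong\Com$). The Jacobi identity for $\bar\lambda$ holds because the commutator of a pre-Lie product satisfies the Jacobi identity exactly in $\PL$, so a fortiori in $\gr^2\PL$.

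The Hertling--Manin relation is the one point requiring work. Writing $P_{a_1}(a_2,a_3)=[a_1,a_2\circ a_3]-[a_1,a_2]\circ a_3-a_2\circ[a_1,a_3]$ for the Poisson defect, a direct expansion followed by the pre-Lie identity yields the clean identity
\[
P_{a_1}(a_2,a_3)=-2\big((a_1\cdot a_2)\cdot a_3-a_1\cdot(a_2\cdot a_3)\big)
\]
in the free pre-Lie algebra: the Poisson defect is $-2$ times the associator $\langle a_1,a_2,a_3\rangle:=(a_1\cdot a_2)\cdot a_3-a_1\cdot(a_2\cdot a_3)$, which the pre-Lie identity renders symmetric in $a_2$ and $a_3$. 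In particular $P_{a_1}(a_2,a_3)\in I$. Substituting this formula into $P_{a_1\circ a_2}(a_3,a_4)-a_1\circ P_{a_2}(a_3,a_4)-a_2\circ P_{a_1}(a_3,a_4)$ and collecting terms, one finds that this differs from the Hertling--Manin element only by commutators $x\circ y-y\circ x$ (which vanish) and $\circ$-associators into one input of which a bracket $[a_i,a_j]$ has been plugged (which lie in $F^2\PL$, since the $\circ$-associator lies in $I$ and a bracket lies in $I$). Hence, modulo $F^2\PL$, the Hertling--Manin element equals
\[
-2\big(\langle a_1\circ a_2,a_3,a_4\rangle-a_1\circ\langle a_2,a_3,a_4\rangle-a_2\circ\langle a_1,a_3,a_4\rangle\big),
\]
and it remains to show that this element of $\PL(4)$ lies in $F^2\PL=I^2$.

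This last statement is the main obstacle. I would prove it by a finite direct computation: expand the three associators into pre-Lie monomials (planar binary trees on four leaves), rewrite the resulting element of $\PL(4)$ by means of the pre-Lie identity until it is in the rooted-tree normal form for $\PL$, and verify that every surviving term is a tree tensor with at least two $[-,-]$-vertices, i.e. lies in $F^2\PL$. With this in hand, the image of the Hertling--Manin element in $\gr^1\PL$ vanishes, so all three defining relations of $\FM$ hold in $\gr_F\PL$; combined with the generation statement of the first paragraph, this produces the desired surjection $\FM\twoheadrightarrow\gr_F\PL$. Every step except this final computation is purely formal, so it is there --- in the compatibility of the pre-Lie associator with the symmetrised product modulo the square of the bracket ideal --- that the content of the lemma lies.
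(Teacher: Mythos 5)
Your overall strategy coincides with the paper's: exhibit the cosets of the symmetrised product $\circ$ and the commutator $[-,-]$ as generators of $\gr_F\PL$ and check that the three defining relations of $\FM$ hold on them. The generation step, the associativity of $\circ$ modulo $F^1\PL$, the Jacobi identity, and the identity $P_{a_1}(a_2,a_3)=-2\bigl((a_1\cdot a_2)\cdot a_3-a_1\cdot(a_2\cdot a_3)\bigr)$ are all correct (the last is a genuinely clean reformulation that the paper does not state explicitly), as is the observation that the Hertling--Manin element agrees modulo $F^2\PL$ with $-2\bigl(\langle a_1\circ a_2,a_3,a_4\rangle-a_1\circ\langle a_2,a_3,a_4\rangle-a_2\circ\langle a_1,a_3,a_4\rangle\bigr)$.

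The gap is exactly where you locate it: the membership of that last element in $F^2\PL=I^2$ carries the entire content of the lemma, and you do not prove it --- you only describe a computation you ``would'' perform. Moreover, the description of that computation is not coherent as stated: the rooted-tree normal form of an element of $\PL(4)$ is written in the single pre-Lie product and has no $[-,-]$-labelled vertices, so one cannot ``verify that every surviving term is a tree tensor with at least two $[-,-]$-vertices'' by inspecting that normal form. What is actually needed is either (a) linear algebra --- expand both your element and a spanning set of $F^2\PL(4)$ in the rooted-tree basis and check containment --- or (b) an explicit rewriting of the element in the alphabet $\{\circ,[-,-]\}$ in which every monomial visibly contains at least two brackets. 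The paper produces a certificate of type (b): it starts from the relation of \cite[Example~5.6.4.1]{BrDo} expressing the $\circ$-associator as a combination of bracketed terms, computes the S-polynomial of that relation with itself in the shuffle-operad setting, and displays the resulting quartic identity, whose part with exactly one bracket is the Hertling--Manin relation modulo associativity while all remaining terms carry two or three brackets. Until you supply such a certificate (or actually carry out the linear-algebra check), the proposal is a correct and well-organised reduction, but not a proof.
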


\begin{proof}
Recall that the symmetrised pre-Lie product $-\circ-$ and the bracket $[-,-]$ in the operad $\PL$ satisfy the following relation:
\begin{multline*}
(a_1\circ a_2)\circ a_3 - a_1\circ (a_2 \circ a_3) - a_1\circ [a_2, a_3] - [a_1, a_2]\circ a_3 - 2[a_1, a_3]\circ a_2+\\
 [a_1, a_2\circ a_3] + [a_1\circ a_2, a_3] + [[a_1, a_3], a_2] = 0 ,
\end{multline*}
see  \cite[Example~5.6.4.1]{BrDo}. Considering it as a relation in a shuffle operad  and computing its S-polynomial with itself, one arrives at a relation 
\begin{multline*}
 -[a_1\circ a_2, a_3]\circ a_4  - [a_1\circ a_2, a_4] \circ a_3  +  [a_1, a_4]\circ (a_2\circ a_3)  +  [a_1, a_3]\circ (a_2\circ a_4)  - \\
 [a_1, a_3\circ a_4]\circ a_2  +  [a_1\circ a_2, a_3\circ a_4]  +  a_1\circ ([a_2, a_3]\circ a_4)  +  a_1\circ ([a_2, a_4]\circ a_3)  -  \\
 a_1\circ [a_2, a_3\circ a_4]  +  [[a_1, a_3], a_2]\circ a_4  +  [[a_1, a_4], a_2]\circ a_3  +  2 [[a_1, a_4], a_3]\circ a_2  +\\
  [a_1, [a_2, a_3]]\circ a_4  +  [a_1, [a_2, a_4]]\circ a_3  +  [a_1, a_4]\circ [a_2, a_3]  +  [a_1, a_3]\circ [a_2, a_4]  + \\
 [a_1, [a_3, a_4]]\circ a_2  -  [[a_1, a_4], a_2\circ a_3]  -  [[a_1, a_3], a_2\circ a_4]  -  [a_1, [a_2, a_3]\circ a_4]  - \\
 [a_1, [a_2 a_4]\circ a_3]  -  2 [[[a_1, a_4], a_3], a_2]  -  [[a_1, a_4], [a_2, a_3]]  -  [[a_1, a_3], [a_2, a_4]]  -  [[a_1, [a_3, a_4]], a_2]=0.
\end{multline*}
Let us consider the images of both of these relations in $\gr_{F}\PL$. The first one is a combination of terms where the bracket is used $0$, $1$, and $2$ times. In the associated graded object, this relation becomes the associativity of the product~$-\circ-$. The second one is a combination of terms where the bracket is used $1$, $2$, and $3$ times. In the associated graded object, this relation becomes  
\begin{multline*}
 -[a_1\circ a_2, a_3]\circ a_4  - [a_1\circ a_2, a_4] \circ a_3  +  [a_1, a_4]\circ (a_2\circ a_3)  +  [a_1, a_3]\circ (a_2\circ a_4)  -  [a_1, a_3\circ a_4]\circ a_2  +\\
  [a_1\circ a_2, a_3\circ a_4]  +  a_1\circ ([a_2, a_3]\circ a_4)  +  a_1\circ ([a_2, a_4]\circ a_3)  -  a_1\circ [a_2, a_3\circ a_4]  =0 , 
\end{multline*}
which, modulo associativity, is equivalent to the Hertling--Manin condition. It remains to notice that by a standard polarisation argument \cite{MaRe}, the operations $-\circ-$ and $[-,-]$ generate the operad $\PL$, so the operad $\gr_F\PL$ is generated by their cosets, and hence it is a homomorphic image of the operad $\FM$.
\end{proof}

\section*{Almost composite products and almost distributive laws} 

In the case of the operad of Poisson algebras, the relation between $-\circ-$ and $[-,-]$ is a rewriting rule allowing to get rid of all occurrences of products inside brackets, showing that there is a surjective map onto the Poisson operad from the composite product $\Com\circ\Lie$. In the case of $F$-manifold algebras, one can mimic this approach. For that, we introduce and study a new general operadic construction. 

\begin{definition}
Suppose that $\calP=\calT(\calX)/(\calR)$ and $\calQ=\calT(\calY)/(\calS)$ are two operads. The \emph{almost composite product of $\calP$ and $\calQ$}, denoted 
$\calP\triangledown_0\calQ$, is defined as
 \[
\calP\triangledown_0\calQ := \calP\bigvee\calQ / (\alpha(\beta_1,\ldots,\beta_k) \colon \alpha\in\calY, \beta_i\in\calX) .
 \]
\end{definition}

\begin{remark}
The usual composite product $\calP\circ\calQ$ of the underlying $\mathbb{S}$-modules of $\calP$ and $\calQ$ is the underlying $\mathbb{S}$-module of the operad
 \[
\calP\vee_0\calQ:=\calP\bigvee\calQ / (\alpha(\beta,\id,\ldots,\id) \colon \alpha\in\calY, \beta\in\calX) ,
 \]
whose relations are known as the trivial distributive law between $\calP$ and $\calQ$, see \cite[Sec.~8.6.4]{LV}, which explains our terminology. Note that unlike the trivial distributive laws, almost composite products are not defined by quadratic relations, unless $\calQ$ is generated by unary operations. This takes these operads outside the scope of commonly used methods of operad theory.   
\end{remark}

We shall now establish two general results about almost composite products, which are analogous to corresponding results about trivial distributive laws. 
First, we shall show that the almost composite product of two operads $\calP$ and $\calQ$ is an upper bound on a certain class of quotients of $\calP\bigvee\calQ$, in the same way as the trivial distributive law is an upper bound for general rewriting rules used to define distributive laws between operads~\cite{LV,Markl94}. 

\begin{proposition}\label{lm:ZeroBound}
Let $\calP=\calT(\calX)/(\calR)$ and $\calQ=\calT(\calY)/(\calS)$ be two operads, and let $\calO$ be an operad of the form
 \[
\calO := \calP\bigvee\calQ / (\alpha(\beta_1,\ldots,\beta_k)-f(\alpha(\beta_1,\ldots,\beta_k)) \colon \alpha\in\calY, \beta_i\in\calX) ,
 \]
where $f\colon\calY\circ\calX\to \calP\bigvee\calQ$ is a linear map whose image is contained in the right ideal generated by~$\calX$. 
Then for each $n\ge 1$, there is a surjection of vector spaces
 \[
(\calP\triangledown_0\calQ)(n)\twoheadrightarrow\calO(n) . 
 \]
\end{proposition}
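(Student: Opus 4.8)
The plan is to exhibit, for each arity $n$, a surjective linear map from $(\calP\triangledown_0\calQ)(n)$ onto $\calO(n)$ by showing that the two operads are presented by the same generators while the relations defining $\calP\triangledown_0\calQ$ are ``as large as possible'' among those permitted by the hypothesis on $f$. Concretely, both $\calP\triangledown_0\calQ$ and $\calO$ are quotients of the free product $\calP\bigvee\calQ$; to produce the surjection it suffices to check that every defining relation of $\calP\triangledown_0\calQ$ holds in $\calO$, because then the canonical projection $\calP\bigvee\calQ\twoheadrightarrow\calO$ factors through $\calP\bigvee\calQ\twoheadrightarrow\calP\triangledown_0\calQ$. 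So the first step is purely formal: identify the ideal $I=(\alpha(\beta_1,\ldots,\beta_k)\colon\alpha\in\calY,\ \beta_i\in\calX)$ defining $\calP\triangledown_0\calQ$ and the ideal $J=(\alpha(\beta_1,\ldots,\beta_k)-f(\alpha(\beta_1,\ldots,\beta_k)))$ defining $\calO$, and observe that modulo $J$ every element $\alpha(\beta_1,\ldots,\beta_k)$ becomes equal to $f(\alpha(\beta_1,\ldots,\beta_k))$, which by hypothesis lies in the right ideal generated by $\calX$.

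The crux is then the second step: showing that the right ideal generated by $\calX$ maps to zero in $\calP\triangledown_0\calQ$ — equivalently, that modulo $I$ the image of $f$ vanishes — so that the relations $\alpha(\beta_1,\ldots,\beta_k)=0$ of $\calP\triangledown_0\calQ$ are consequences of the relations $\alpha(\beta_1,\ldots,\beta_k)=f(\alpha(\beta_1,\ldots,\beta_k))$ of $\calO$. For this I would argue that $I$ already contains the whole right ideal generated by $\overline{\calX}\subseteq\overline{\calP}$ inside $\calP\bigvee\calQ$. Indeed, $I$ is by definition an operadic ideal containing all $\alpha(\beta_1,\dots,\beta_k)$ with $\alpha\in\calY$; any element of the right ideal generated by $\calX$ is an iterated operadic composition built from generators of $\calP$ and $\calQ$ in which at least one input slot is filled by an element of $\calX$, and by decomposing such a tree tensor according to whether the vertex immediately above that $\calX$-labelled leaf is in $\calP$ or in $\calQ$, one reduces to the generating relations of $I$ (when it is a $\calQ$-vertex applied to $\calX$-inputs, we are literally in the generating set; when it is a $\calP$-vertex, we stay inside $\overline{\calP}$ and can push further up the tree until we either hit the root or a $\calQ$-vertex). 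Hence $f(\alpha(\beta_1,\dots,\beta_k))\in I$ for all $\alpha,\beta_i$, which gives $J\subseteq I$, and therefore a surjection of operads $\calP\triangledown_0\calQ=(\calP\bigvee\calQ)/I\twoheadrightarrow(\calP\bigvee\calQ)/J=\calO$; restricting to arity $n$ yields the claimed surjection of vector spaces.

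I expect the main obstacle to be the bookkeeping in that second step — making precise the claim that the right ideal generated by $\calX$ is contained in $I$. The subtlety is that $I$ is generated only by the special elements $\alpha(\beta_1,\dots,\beta_k)$ (a $\calQ$-operation directly on $\calP$-inputs), not by arbitrary occurrences of $\calX$-elements inside a tree, so one must genuinely use that $\calP$ is itself generated by $\calX$ modulo $\calR$ in order to walk up the tree from an $\calX$-leaf through a string of $\calP$-vertices until a $\calQ$-vertex is reached; a clean way to formalise this is to induct on the number of internal vertices of the tree tensor lying strictly between the chosen $\calX$-leaf and the nearest $\calQ$-vertex above it (or the root). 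Once this is set up the rest is the routine factorisation-of-quotients argument. Note that, as remarked after the definition, $\calP\triangledown_0\calQ$ need not be defined by quadratic relations, so no Koszulness or Gröbner-basis input is available or needed here; the statement is a soft one about ideals in free products of operads.
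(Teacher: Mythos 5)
There is a genuine gap here, and in fact the overall strategy cannot work. You are trying to produce a surjection of \emph{operads} $\calP\triangledown_0\calQ\twoheadrightarrow\calO$ by showing that the ideal $I$ of relations of $\calP\triangledown_0\calQ$ is contained in the ideal $J$ of relations of $\calO$; but the proposition only claims a surjection of vector spaces in each arity, and in general no operad morphism exists in this direction. Test your claim on the motivating example $\calP=\Com$, $\calQ=\Lie$, $\calO=\FM$: the relation $[a_1\circ a_2,a_3\circ a_4]=0$ of $\Com\triangledown_0\Lie$ does \emph{not} hold in $\FM$, where the Hertling--Manin relation sets this element equal to a nonzero combination of $\circ$-rooted terms. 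Your key intermediate claim --- that the right ideal generated by $\calX$ is contained in $I$, i.e.\ vanishes in $\calP\triangledown_0\calQ$ --- is false: elements such as $(a_1\circ a_3)\circ[a_2,a_4]$ (or $[a_1\circ a_2,a_3]$, under your reading of ``right ideal'') lie in that right ideal but are nonzero in $\Com\triangledown_0\Lie$; if the whole right ideal died, $\calP\triangledown_0\calQ$ would essentially collapse onto $\calQ$, contradicting the count $\dim(\Com\triangledown_0\Lie)(n)=n^{n-1}$ established later in the paper. The ``walk up the tree'' induction cannot repair this, because $I$ is generated only by compositions with a $\calY$-operation at the root and all inputs from $\calX$, and no such generator divides the monomials above. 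There is also a direction error: even if the image of $f$ did vanish modulo $I$, that would give $J\subseteq I$ and hence a surjection $\calO\twoheadrightarrow\calP\triangledown_0\calQ$, the opposite of what is wanted.

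The actual argument is the Gr\"obner-basis/rewriting-system one that your final sentence dismisses as unnecessary. One passes to shuffle operads, introduces a total well-order on tree monomials (refining an admissible order $\prec_0$ by first comparing the number $n(T)$ of $\calX$-above-$\calY$ vertex pairs) which is not a monomial order but still orients every relation and guarantees termination; the hypothesis that the image of $f$ lies in the right ideal generated by $\calX$ is used precisely to ensure that $\alpha(\beta_1,\ldots,\beta_k)$ is the leading term of each mixed relation of $\calO$ with respect to this order. Running Knuth--Bendix completion for both presentations, one checks that every left-hand side produced for $\calP\triangledown_0\calQ$ also appears as a left-hand side for $\calO$ (possibly among others), so the normal forms for $\calO$ form a subset of those for $\calP\triangledown_0\calQ$; this yields the inequality of dimensions, realised by a linear surjection in each arity, with no claim of compatibility with compositions.
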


\begin{proof}
Let us view the shuffle operads associated to $\calP\triangledown_0\calQ$ and $\calO$ as quotients of the free shuffle operad generated by $\calX\oplus\calY$. We fix some admissible order $\prec_0$ of shuffle tree monomials.  

We first note that from the algorithm of computing the reduced Gr\"obner basis for a given operad, it is clear that the reduced Gr\"obner basis for the operad~$\calP\triangledown_0\calQ$ consists of linear combinations of monomials where vertices labelled by elements of $\calY$ are ``closer to the leaves'', that is there is no vertex labelled by an element from $\calY$ has a child labelled by an element of $\calX$. 

Let us now examine the operad $\calO$. The situation with this operad is more complicated: generally, there is no choice of an admissible order for which $\alpha(\beta_1,\ldots,\beta_k)$ is the leading term of $\alpha(\beta_1,\ldots,\beta_k)-f(\alpha(\beta_1,\ldots,\beta_k))$. For that reason, we need to invoke a rewriting system argument. Let us define the following order on shuffle tree monomials: $T\prec T'$ if 
\begin{itemize}
\item[-] $n(T)>n(T')$, where $n(T)$ is the number of pairs of internal vertices $(v,v')$ of $T$, where $v$ is on the way from the root of $T$ to $v'$, the label of $v$ is  from $\calX$, and the label of $v'$ is from $\calY$;
\item[-] $n(T)=n(T')$, and $T\prec_0 T'$. 
\end{itemize}
This order is not a monomial order. However, it is a total order, so it certainly allows to convert relations into rewriting rules (so every set of relations gives rise to a rewriting system), and it is a well-order (so every rewriting system is convergent). Note that for the operad $\calP\triangledown_0\calQ$ only tree monomials $T$ with $n(T)=0$ appear in the process of Knuth--Bendix completion of the corresponding rewriting system \cite{KB}, so the direction of all relations is dictated by $\prec_0$, and the result of the completion is the reduced Gr\"obner basis. 

Let us now apply the Knuth--Bendix completion procedure to relations of the operad $\calO$. It is clear than when we consider the critical pairs coming from the mixed $\calP$-$\calQ$-relations and the $\calQ$-relations, the computation mimics the one we performed for the operad  $\calP\triangledown_0\calQ$, adding some extra terms for which the parameter $n(T)$ is higher; thus these terms are smaller with respect to the order $\prec$. This means that whenever the Knuth--Bendix procedure produced a new rewriting rule for $\calP\triangledown_0\calQ$, it produces a new rewriting rule for $\calO$, and the left hand side of that rule is the same. (In principle, completely new rewriting rules may arise here: if a critical pair does not produce a new rewriting rule for $\calP\triangledown_0\calQ$, it still may produce one for $\calO$.) By contrast, the critical pairs coming from the mixed $\calP$-$\calQ$-relations and the $\calP$-relations contribute nothing in the case of the operad~$\calP\triangledown_0\calQ$, but may result in new Gr\"obner basis elements for $\calO$. 

We observe that the left hands sides of the rewriting rule set of normal forms for the component~$\calO$ include all the left hand sides of the rewriting rule set of normal forms for the component~$\calP\bigvee\calQ$, so the  set of normal forms for the operad $\calO$ is a subset of the set of the normal forms for $(\calP\triangledown_0\calQ)(n)$, which proves our claim. 
\end{proof}

The analogy with distributive laws mentioned above suggests the following definition.

\begin{definition}
Let $\calP=\calT(\calX)/(\calR)$ and $\calQ=\calT(\calY)/(\calS)$ be two operads, and let $\calO$ be an operad of the form
 \[
\calO := \calP\bigvee\calQ / (\alpha(\beta_1,\ldots,\beta_k)-f(\alpha(\beta_1,\ldots,\beta_k)) \colon \alpha\in\calY, \beta_i\in\calX) ,
 \]
where $f\colon\calY\circ\calX\to \calP\bigvee\calQ$ is a linear map whose image is contained in the right ideal generated by~$\calX$. 
The map~$f$ is said to be an \emph{almost distributive law between $\calP$ and $\calQ$} if for each $n$ we have an isomorphism of 
$\mathbb{S}$-modules
 \[
(\calP\triangledown_0\calQ)(n)\cong\calO(n) . 
 \]
\end{definition}

\smallskip 

Let us now determine the minimal model of the almost composite product of two Koszul operads. This result is inspired by both the computation of the minimal model of the trivial distributive law between two Koszul operads \cite[Prop.~8.6.3]{LV} and the approach to infinity-morphisms of strong homotopy algebras using ``homotopy Koszul operads'' \cite{MV}.

\begin{proposition}\label{lm:PQresol}
Let $\calP=\calT(\calX)/(\calR)$ and $\calQ=\calT(\calY)/(\calS)$ be two Koszul operads. Consider the endomorphism $d$ of the free operad 
$\calT(s^{-1}\overline{\calQ^{\ac}}\oplus\overline{\calQ^{\ac}}\oplus s^{-1}\overline{\calP^{\ac}})$ 
defined on the generators as follows:
\begin{itemize}
\item[-] on the first group of generators, $d(s^{-1}\lambda)=d_{\Omega(\calQ^{\ac})}(s^{-1}\lambda)$,
\item[-] on the second group of generators, $d(\lambda)=s^{-1}\lambda - (\id\otimes s)d_{\Omega(\calQ^{\ac})}(s^{-1}\lambda)$,
\item[-] on the third group of generators, $d(s^{-1}\mu)=d_{\Omega(\calP^{\ac})}(s^{-1}\mu)$.
\end{itemize}
Then $d^2=0$. Moreover, the suboperad of $\calT(s^{-1}\overline{\calQ^{\ac}}\oplus\overline{\calQ^{\ac}}\oplus s^{-1}\overline{\calP^{\ac}})$ generated by  
\[s^{-1}\overline{\calQ^{\ac}}\oplus s^{-1}\overline{\calP^{\ac}}\oplus (\overline{\calQ^{\ac}}\circ (s^{-1}\overline{\calP^{\ac}}))\] is a free operad and a $d$-invariant subspace, and the quasi-free operad
 \[
(\calT(s^{-1}\overline{\calQ^{\ac}}\oplus s^{-1}\overline{\calP^{\ac}}\oplus (\overline{\calQ^{\ac}}\circ (s^{-1}\overline{\calP^{\ac}}))), d)
 \]
is a minimal model of the operad $\calP\triangledown_0\calQ$.
\end{proposition}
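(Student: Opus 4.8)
The plan is to verify the claim in two stages: first, that $d^2=0$ on all three groups of generators and that the stated suboperad is $d$-invariant; second, that the resulting quasi-free operad is acyclic in positive degrees with homology $\calP\triangledown_0\calQ$. For the first stage, I would recall that $\Omega(\calQ^{\ac})$ and $\Omega(\calP^{\ac})$ are the Koszul (bar-cobar) resolutions of $\calQ$ and $\calP$, so $d_{\Omega(\calQ^{\ac})}^2=0$ and $d_{\Omega(\calP^{\ac})}^2=0$ are known. On the first and third groups of generators $d$ agrees with these cobar differentials, so there $d^2=0$ is immediate. The only genuine computation is on the second group: applying $d$ to $d(\lambda)=s^{-1}\lambda-(\id\otimes s)d_{\Omega(\calQ^{\ac})}(s^{-1}\lambda)$, one gets $d_{\Omega(\calQ^{\ac})}(s^{-1}\lambda)$ from the first term and must check that $d$ applied to $(\id\otimes s)d_{\Omega(\calQ^{\ac})}(s^{-1}\lambda)$ cancels it; this is a bookkeeping exercise with the coproduct of $\calQ^{\ac}$ and the suspension/desuspension signs, entirely parallel to the verification in \cite[Prop.~8.6.3]{LV} and \cite{MV}. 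The $d$-invariance of the suboperad generated by $s^{-1}\overline{\calQ^{\ac}}\oplus s^{-1}\overline{\calP^{\ac}}\oplus(\overline{\calQ^{\ac}}\circ(s^{-1}\overline{\calP^{\ac}}))$ follows because $d$ of a generator $s^{-1}\lambda$ stays in $\calT(s^{-1}\overline{\calQ^{\ac}})$, $d$ of $s^{-1}\mu$ stays in $\calT(s^{-1}\overline{\calP^{\ac}})$, and $d$ of an element of $\overline{\calQ^{\ac}}\circ(s^{-1}\overline{\calP^{\ac}})$ produces, via the Leibniz rule, terms of the form $s^{-1}\overline{\calQ^{\ac}}$ composed into $s^{-1}\overline{\calP^{\ac}}$'s together with $\overline{\calQ^{\ac}}$ applied to $d(s^{-1}\overline{\calP^{\ac}})\in\calT(s^{-1}\overline{\calP^{\ac}})$ — and here one uses that the second-group generators only ever appear inside the combination that forms $\overline{\calQ^{\ac}}\circ(s^{-1}\overline{\calP^{\ac}})$. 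That the suboperad is itself free on those generators is because no relations among the composites are imposed in a free operad; one checks the chosen generating $\mathbb{S}$-submodule is a direct summand and generates a free suboperad, which is a standard property of free operads on a subspace closed under the relevant grafting pattern.

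For the second stage, I would set up the standard weight grading on $\calT(s^{-1}\overline{\calQ^{\ac}}\oplus\overline{\calQ^{\ac}}\oplus s^{-1}\overline{\calP^{\ac}})$ and argue that $(\calT(s^{-1}\overline{\calQ^{\ac}}\oplus\overline{\calQ^{\ac}}\oplus s^{-1}\overline{\calP^{\ac}}),d)$ is acyclic in the appropriate sense. The key device is the contracting homotopy coming from the part of $d$ that sends a second-group generator $\lambda$ to $s^{-1}\lambda$: this identifies, up to homotopy, $\overline{\calQ^{\ac}}$ with $s^{-1}\overline{\calQ^{\ac}}$, collapsing the first two groups of generators together. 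Concretely, one filters $d$ as $d_0+d_1$ where $d_0$ is the "identification" piece $\lambda\mapsto s^{-1}\lambda$ extended as a derivation and $d_1$ is the rest, runs the associated spectral sequence (or an explicit acyclic-models / Künneth-type argument on the free operad), and observes that the $d_0$-homology is exactly the free operad $\calT(s^{-1}\overline{\calQ^{\ac}}\oplus s^{-1}\overline{\calP^{\ac}}\oplus(\overline{\calQ^{\ac}}\circ(s^{-1}\overline{\calP^{\ac}})))$ — the surviving generators being the first-group and third-group ones, plus exactly those composites $\overline{\calQ^{\ac}}\circ(s^{-1}\overline{\calP^{\ac}})$ where a $\calQ^{\ac}$-vertex sits directly above a $\calP^{\ac}$-vertex and so cannot be "cancelled" by $d_0$ against a replacement of the $\calQ^{\ac}$-vertex. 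This is precisely why the suboperad in the statement is the right one. After passing to $d_0$-homology, the induced differential $\bar d_1$ is identified with the differential of the claimed minimal model, and one recognizes it (again by comparison with the two-sided bar-type constructions for distributive laws) as computing $\calP\triangledown_0\calQ$ in weight-grading degree $0$ and nothing in higher homological degree, using that $\calP$ and $\calQ$ are individually Koszul so that $\Omega(\calP^{\ac})\to\calP$ and $\Omega(\calQ^{\ac})\to\calQ$ are quasi-isomorphisms.

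The main obstacle will be the second stage: controlling the homology of the big quasi-free operad and matching it to $\calP\triangledown_0\calQ$. Unlike the trivial distributive law case in \cite[Prop.~8.6.3]{LV}, the almost composite product is not defined by quadratic relations, so one cannot simply cite Koszulness of $\calP\triangledown_0\calQ$; instead the acyclicity has to be proven by hand via the filtration/spectral sequence argument above, and one must be careful that the minimal model's generating $\mathbb{S}$-module $s^{-1}\overline{\calQ^{\ac}}\oplus s^{-1}\overline{\calP^{\ac}}\oplus(\overline{\calQ^{\ac}}\circ(s^{-1}\overline{\calP^{\ac}}))$ really has the right dimensions — i.e., that $\calP\triangledown_0\calQ$ has the underlying $\mathbb{S}$-module $\calP\circ\calQ$ with the appropriate "separating" behaviour, which is exactly the content that makes the composite $\overline{\calQ^{\ac}}\circ(s^{-1}\overline{\calP^{\ac}})$ (rather than all of $\calT(\ldots)$) sufficient. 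I expect that once the bigraded spectral sequence is written down and its $E_1$-page computed using the individual Koszulness of $\calP$ and $\calQ$, the collapse at $E_2$ is forced for degree reasons, so the remaining work is genuinely the sign-and-bookkeeping verification of $d^2=0$ on the second group of generators together with a clean formulation of the spectral sequence; I would present the latter carefully and relegate the former to a remark that it is a routine check mirroring \cite{MV}.
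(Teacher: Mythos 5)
Your first stage (reducing the check of $d^2=0$ to the second group of generators, the $d$-invariance of the suboperad, and its freeness) matches the paper's argument. The second stage, however, routes through the ambient operad $\calT(s^{-1}\overline{\calQ^{\ac}}\oplus\overline{\calQ^{\ac}}\oplus s^{-1}\overline{\calP^{\ac}})$, and the key step there is wrong. If $d_0$ is the derivation extending $\lambda\mapsto s^{-1}\lambda$ (and vanishing on the other two groups of generators), then over a field of characteristic zero the homology of $(\calT(V),d_0)$ is $\calT(H(V,d_0))$, which here is $\calT(s^{-1}\overline{\calP^{\ac}})$: the first two groups of generators cancel completely. In particular, the composites you want to survive do not survive: precisely because $d_0$ is a derivation, $d_0\bigl(\lambda\circ(s^{-1}\mu_1,\ldots,s^{-1}\mu_k)\bigr)=(s^{-1}\lambda)\circ(s^{-1}\mu_1,\ldots,s^{-1}\mu_k)\neq 0$, so a $\calQ^{\ac}$-vertex sitting directly above $\calP^{\ac}$-vertices is cancelled just as well as a bare one. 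Hence the $E_1$-page of your spectral sequence is not $\calT(s^{-1}\overline{\calQ^{\ac}}\oplus s^{-1}\overline{\calP^{\ac}}\oplus(\overline{\calQ^{\ac}}\circ(s^{-1}\overline{\calP^{\ac}})))$, and the big quasi-free operad resolves (essentially) $\calP$ rather than $\calP\triangledown_0\calQ$. Moreover, even a correct computation of the homology of the ambient complex would not give the homology of the subcomplex, since the inclusion of a subcomplex need not be a quasi-isomorphism; the ambient operad is only used in the statement as a convenient place to define $d$.

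The paper never computes the homology of the big operad: it works directly with the subcomplex $(\calT(s^{-1}\overline{\calQ^{\ac}}\oplus s^{-1}\overline{\calP^{\ac}}\oplus(\overline{\calQ^{\ac}}\circ(s^{-1}\overline{\calP^{\ac}}))),d)$ and runs two successive filtrations there. First it filters by the number of $\calQ$-type generators, so the associated graded differential is the $\Omega(\calP^{\ac})$-part; Koszulness of $\calP$ identifies the next page with $\calT(\calX\oplus s^{-1}\overline{\calQ^{\ac}}\oplus(\overline{\calQ^{\ac}}\circ\calX))/(\calR)$. Then it filters by the number of $\calX$-labelled vertices, so the associated graded differential is the piece $\lambda\circ(x_1,\ldots,x_k)\mapsto s^{-1}\lambda\circ(x_1,\ldots,x_k)$, which kills the third group of generators while imposing the relations $s^{-1}\overline{\calQ^{\ac}}\circ\calX=0$ --- this is exactly where the defining relations $\alpha(\beta_1,\ldots,\beta_k)=0$ of $\calP\triangledown_0\calQ$ appear; the residual differential $d_{\Omega(\calQ^{\ac})}$ is then handled by Koszulness of $\calQ$, and there is no room for higher differentials. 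To close your argument you would need to replace your single filtration of the ambient operad by such a two-step argument on the subcomplex; as written, the second stage does not establish that the quasi-free operad in the statement resolves $\calP\triangledown_0\calQ$. (Your side remark that the underlying $\mathbb{S}$-module of $\calP\triangledown_0\calQ$ should be $\calP\circ\calQ$ is also not part of the claim --- that is the trivial distributive law $\calP\vee_0\calQ$, and the almost composite product is in general larger.)
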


\begin{proof}
A direct inspection confirms that $d^2=0$ (note that this is only needed to check on the second group of generators, as on the two other groups it follows from the fact that the cobar complex of a cooperad is a chain complex). It is also immediate to check that the suboperad $s^{-1}\overline{\calQ^{\ac}}\oplus s^{-1}\overline{\calP^{\ac}}\oplus (\overline{\calQ^{\ac}}\circ (s^{-1}\overline{\calP^{\ac}}))$ is $d$-invariant; once again, it is clear that for the generators of the first and the second group, their images under $d$ are made of elements of the same kind, so only the generators of $\overline{\calQ^{\ac}}\circ (s^{-1}\overline{\calP^{\ac}})$ need to be inspected. Also, this suboperad is free, since every element can be uniquely represented as a composite of generators; for that, it is useful to note that $\overline{\calQ^{\ac}}$ (without $s^{-1}$) only appears in the third group of generators. 

Let us compute the homology of the operad
 \[
(\calT(s^{-1}\overline{\calQ^{\ac}}\oplus s^{-1}\overline{\calP^{\ac}}\oplus (\overline{\calQ^{\ac}}\circ (s^{-1}\overline{\calP^{\ac}}))), d) .
 \]
For that, we introduce a weight grading on this operad defined by assigning weight $0$ to generators from $s^{-1}\overline{\calP^{\ac}}$, and weight $1$ to generators from both $s^{-1}\overline{\calQ^{\ac}}$ and $\overline{\calQ^{\ac}}\circ (s^{-1}\overline{\calP^{\ac}})$. Note that the contributions of $d_{\Omega(\calP^{\ac})}$ to $d$ (those appear in the images under $d$ of generators from $s^{-1}\overline{\calP^{\ac}}$ and $\overline{\calQ^{\ac}}\circ (s^{-1}\overline{\calP^{\ac}})$) do not change weight, and all other contributions to $d$ increase weight by at least one. Thus, we are dealing with a filtered chain complex, and we may consider the corresponding spectral sequence. The differential of the first page of that spectral sequence kills all the higher homotopies for $\calP$; thus, homology of that differential is identified with the operad
 \[
\calT(\calX\oplus s^{-1}\overline{\calQ^{\ac}} \oplus(\overline{\calQ^{\ac}}\circ\calX))/(\calR) ,
 \]
with the obvious differential derived from $d$. To compute the homology of that differential, we introduce another filtration defined by assigning weight $0$ to generators from $\calX$, and weight $1$ to generators from both $s^{-1}\overline{\calQ^{\ac}}$ and $\overline{\calQ^{\ac}}\circ\calX$. Note that for each generator, the contribution of the map $\lambda\mapsto s^{-1}\lambda$ does not change weight, and the contributions of $d_{\Omega(\calQ^{\ac})}$ increase the weight by at least one. The differential of the first page of the spectral sequence of this new filtered complex has the homology 
 \[
\calT(\calX\oplus s^{-1}\overline{\calQ^{\ac}})/(\calR\oplus (s^{-1}\overline{\calQ^{\ac}}\circ\calX)) ,
 \]
with the differential $(0,d_{\Omega(\calQ^{\ac})})$. The homology of that differential is manifestly the operad $\calP\triangledown_0\calQ$, and there is no room for further differentials. 
\end{proof}

\section*{Main result}

\begin{proof}[Proof of Theorem~\ref{th:main}]
Let us first establish that $\dim(\Com\triangledown_0\Lie)(n)=\dim\PL(n)$ for all $n\ge 1$,  or in other words
 \[
f_{\Com\triangledown_0\Lie}(t)=\sum_{n\ge 1}\frac{\dim(\Com\triangledown_0\Lie)(n)}{n!}t^n=\sum_{n\ge 1}\frac{\dim\PL(n)}{n!}t^n =f_{\PL}(t).
 \]
It is well known that the right hand side is a solution to the functional equation 
\begin{equation}\label{eq:Tree}
f_{\PL}(t)=t\exp(f_{\PL}(t)) ,
\end{equation}
which follows either from the rooted trees construction of the operad $\PL$ as in \cite{CL}, or from the Koszul duality theory. Thus, it is sufficient to establish that the left hand side is a solution to the same functional equation. By Proposition \ref{lm:PQresol}, the minimal model of $\Com\triangledown_0\Lie$ is generated by 
 \[
s^{-1}\overline{\Com^{\ac}}\oplus s^{-1}\overline{\Lie^{\ac}}\oplus(\overline{\Lie^{\ac}}\circ (s^{-1}\overline{\Com^{\ac}})). 
 \]
The generating functions of the Euler characteristics of the corresponding operads are, respectively,
 \[
f_{\overline{\Com^{\ac}}}(t)=-(-\log(1+t)+t)=\log(1+t)-t \quad \text{and} \quad  f_{\overline{\Lie^{\ac}}}(t)=-(\exp(-t)-1+t)=1-t-\exp(-t),
 \]
so we have
\begin{gather*}
f_{s^{-1}\overline{\Lie^{\ac}}}(t)=\exp(-t)-1+t,
f_{s^{-1}\overline{\Com^{\ac}}}(t)=-\log(1+t)+t,\\
f_{\overline{\Lie^{\ac}}\circ (s^{-1}\overline{\Com^{\ac}})}(t)=1+\log(1+t)-t-\exp(\log(1+t)-t)=1+\log(1+t)-t-(1+t)\exp(-t),\\
\end{gather*}
and
\begin{multline*}
f_{s^{-1}\overline{\Com^{\ac}}}(t)+f_{s^{-1}\overline{\Lie^{\ac}}}(t)+f_{\overline{\Lie^{\ac}}\circ (s^{-1}\overline{\Com^{\ac}})}(t)=\\
-\log(1+t)+t+\exp(-t)-1+t+1+\log(1+t)-t-(1+t)\exp(-t)=\\
\exp(-t)+t-(1+t)\exp(-t)=t-t\exp(-t) .
\end{multline*}
Finally, it is known~\cite{MaRe1} that for a minimal model $(\calT(\calE),d)$ of any operad $\calP$, the series $t-f_{\calE}(t)$ is the compositional inverse of the series $f_{\calP}(t)$, so 
$t\exp(-t)$ is the compositional inverse of $f_{\Com\triangledown_0\Lie}(t)$, and
 \[
f_{\Com\triangledown_0\Lie}(t)\exp(-f_{\Com\triangledown_0\Lie}(t))=t ,
 \] 
which is the same as the functional equation \eqref{eq:Tree}, as required.

The result we just proved, together with Propositions \ref{lm:LowerBound} and \ref{lm:ZeroBound}, means that we have a  a diagram of finite-dimensional vector spaces
 \[
\PL(n)\cong(\Com\triangledown_0\Lie)(n)\twoheadrightarrow\FM(n)\twoheadrightarrow \gr_{F}\PL(n) , 
 \] 
where the first and the last term are of the same dimension, and all maps are isomorphisms and surjections, hence all maps must be isomorphisms. In particular, the maps $\FM(n)\twoheadrightarrow \gr_{F}\PL(n)$ arise from a map of operads, so they assemble into an operad isomorphism. The claims about the dimension and the $S_n$-action follow since they are known to hold for the operad $\PL$ \cite{CL}, and our filtration is equivariant.
\end{proof}

\section*{Concluding remarks}

\subsection*{Strong homotopy $F$-manifold algebras}

Our proof of the main result implies that the operad $\FM$ is obtained from operads $\calP$ and $\calQ$ by an almost distributive law. Using a perturbation argument similar to that in~\cite{DKres}, one can establish the following result.

\begin{proposition}
Suppose that $\calO$ is an operad obtained from Koszul operads $\calP$ and $\calQ$ by an almost distributive law, and suppose further that $\calO$ has finite-dimensional components. The shuffle operad $\calO$ admits a minimal resolution with generators 
 \[
s^{-1}\overline{\calQ^{\ac}}\oplus s^{-1}\overline{\calP^{\ac}}\oplus (\overline{\calQ^{\ac}}\circ (s^{-1}\overline{\calP^{\ac}})) .
 \]
\end{proposition}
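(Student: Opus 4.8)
The plan is to follow the blueprint of Proposition~\ref{lm:PQresol} but with a twist: instead of exhibiting a minimal model of $\calP\triangledown_0\calQ$ directly, we perturb that model to obtain a minimal model of $\calO$. The starting point is the observation from Proposition~\ref{lm:PQresol}: the quasi-free shuffle operad $(\calT(\calG),d)$ with $\calG = s^{-1}\overline{\calQ^{\ac}}\oplus s^{-1}\overline{\calP^{\ac}}\oplus (\overline{\calQ^{\ac}}\circ (s^{-1}\overline{\calP^{\ac}}))$ is a minimal model of $\calP\triangledown_0\calQ$. Since $\calO$ is obtained from $\calP$ and $\calQ$ by an almost distributive law $f$, the underlying $\mathbb{S}$-modules of $\calO$ and $\calP\triangledown_0\calQ$ coincide, and the defining relations of $\calO$ differ from those of $\calP\triangledown_0\calQ$ only by the lower-order perturbation term $f(\alpha(\beta_1,\ldots,\beta_k))$, which lies in the right ideal generated by~$\calX$. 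The idea is to build on $\calT(\calG)$ a perturbed differential $d_\infty = d + d_1 + d_2 + \cdots$, where $d_i$ raises the relevant filtration degree by $i$, encoding the effect of $f$ together with all the higher coherences it forces.

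The key steps, in order, are as follows. First, I would set up the appropriate filtration on $\calT(\calG)$ — the same kind of weight grading used in the proof of Proposition~\ref{lm:PQresol}, counting occurrences of generators coming from $\overline{\calP^{\ac}}$ versus those coming from $\calQ$-related data — and verify that $d$ is the associated-graded (leading) part of the differential we seek. Second, I would invoke the homological perturbation lemma in the operadic setting, exactly as in~\cite{DKres}: because $(\calT(\calG),d)$ has homology concentrated in the right degrees (it resolves $\calP\triangledown_0\calQ$, which has the same components as $\calO$ by the almost distributive law hypothesis), a contraction onto the homology can be transferred along the perturbation $f$ to produce a new square-zero derivation $d_\infty$ on the same space of generators. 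Third, I would check that $(\calT(\calG),d_\infty)$ is quasi-isomorphic to $\calO$: the leading term computes $\calP\triangledown_0\calQ$, the finite-dimensionality assumption ensures the spectral sequence of the filtration converges, and since $\dim\calO(n) = \dim(\calP\triangledown_0\calQ)(n)$ for all~$n$ the map must be a quasi-isomorphism with no room for higher pages. Finally, minimality is automatic because the leading differential $d$ is already minimal (decomposable on generators) and perturbation only adds terms of strictly higher filtration, which are therefore also decomposable.

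The main obstacle I expect is making the perturbation argument genuinely rigorous in the operadic, rather than merely chain-level, setting: one must ensure that the transferred differential $d_\infty$ is a \emph{derivation} of the free operad (not just a chain map), which requires the contraction and the perturbation to be compatible with the operadic composition — this is the technical heart of the cited perturbation lemma for operads and is where~\cite{DKres} does the real work. A secondary subtlety is convergence of the perturbation series $d_\infty = d + \sum_{i\ge 1} d_i$: in each arity~$n$ the operad components are finite-dimensional (by hypothesis) and the perturbation strictly increases an $\mathbb{N}$-valued filtration degree bounded in terms of~$n$, so the sum is finite in each arity, but this finiteness must be stated carefully since the filtration is not bounded on the whole operad. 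Once these points are handled, identifying the homology with $\calO$ and confirming minimality are routine, following the dimension-counting logic already used in the proof of Theorem~\ref{th:main}.
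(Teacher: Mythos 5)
Your overall shape (leading differential $d$ from Proposition~\ref{lm:PQresol}, a perturbation $d_\infty=d+d_1+\cdots$ raising a filtration degree, a convergent spectral sequence plus the dimension equality $\dim\calO(n)=\dim(\calP\triangledown_0\calQ)(n)$ to identify the homology, and minimality for free since the perturbation is decomposable) matches the target, and that final bookkeeping is sound. But the crucial step --- the \emph{existence} of a square-zero derivation $d_\infty$ on $\calT(\calG)$ whose degree-one part reproduces the relations of $\calO$ --- is not delivered by what you invoke. The homological perturbation lemma needs a perturbation of the differential of a complex already equipped with a contraction; here $f$ is a modification of the \emph{presentation} of the target operad, not of any differential, so there is nothing for the lemma to act on. If you instead try to build $d_\infty$ inductively, the obstructions live in the deformation complex of the minimal model of $\calP\triangledown_0\calQ$, which is certainly not acyclic ($\calO$ itself is a nontrivial deformation), so their vanishing must be extracted from the almost-distributive-law hypothesis --- and you never explain how the hypothesis ``$(\calP\triangledown_0\calQ)(n)\cong\calO(n)$'' feeds into the construction of $d_\infty$. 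You also mischaracterise \cite{DKres}: its perturbation theorem starts from the explicit resolution of the \emph{associated monomial operad} of a Gr\"obner basis (with generators the Anick-type chains, a much larger set than $\calG$), with termination controlled by the monomial well-order; it does not perturb the minimal model of $\calP\triangledown_0\calQ$ along $f$.

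The paper's route is precisely designed to sidestep this. By the proof of Proposition~\ref{lm:ZeroBound} together with the dimension equality, the operads $\calP\triangledown_0\calQ$ and $\calO$ admit convergent rewriting systems with the \emph{same set of left-hand sides}, hence the same associated monomial shuffle operad. The first theorem of \cite{DKres} resolves that monomial operad; the second perturbs this resolution separately for each of the two operads (and this is where rewriting systems replace Gr\"obner bases, following Kobayashi versus Anick). The two resulting resolutions have the same generators, one then checks that the induced differentials on indecomposables have the same homology, and transferring the homotopy cooperad structure to homology as in \cite{DCV} and invoking Proposition~\ref{lm:PQresol} identifies the generators of the minimal model as claimed. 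To repair your argument you would need either to reproduce this rewriting-system comparison or to supply an independent obstruction-theoretic construction of $d_\infty$; as written, the central existence claim is asserted rather than proved.
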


Let us give a sketch of a proof. The only serious change in comparison to  \cite{DKres} is replacing Gr\"obner bases with rewriting systems, similar to how methods of Kobayashi~\cite{Kobayashi} extend those of Anick~\cite{Anick}. 

The first step, analogous to~\cite[Th.~2.2]{DKres}, which constructs a resolution for any shuffle operad with monomial relations, goes through unchanged. The second step, analogous to~\cite[Th.~4.1]{DKres},  amounts to a perturbation argument which allows one to incorporate lower terms of relations; this argument goes through for rewriting systems without any problem, since in fact it only requires knowing the leading term of each relations for computations, and the well-order property for termination of those computations. Since $\calO$ is obtained from $\calP$ and $\calQ$ by an almost distributive law, the proof of Proposition~\ref{lm:ZeroBound} (under our assumption on finite-dimensionality of components of the operad~$\calO$) shows that the operads $\calP\triangledown_0\calQ$ and~$\calO$ may be presented by rewriting systems with the same sets of left-hand sides, and hence the same associated monomial shuffle operad. 

By examining the perturbation argument in~\cite[Th.~4.1]{DKres} together with the proof of Proposition~\ref{lm:ZeroBound}, one sees that the differentials induced on the spaces of indecomposable elements of thus obtained resolutions of the operads $\calP\triangledown_0\calQ$ and $\calO$ have the same homology. Transferring the homotopy cooperad structure from the space of indecomposable elements to homology~\cite{DCV}, and recalling Proposition \ref{lm:PQresol}, one obtains the claimed result. 

\smallskip 

Thus, the minimal model of the operad $\FM$ has generators 
 \[
s^{-1}\overline{\Com^{\ac}}\oplus s^{-1}\overline{\Lie^{\ac}}\oplus(\overline{\Lie^{\ac}}\circ (s^{-1}\overline{\Com^{\ac}})) . 
 \]
That result is notable in the context of Merkulov's work~\cite{Merk03}, where the notion of an $F_\infty$-manifold is suggested. The operad controlling $F_\infty$-manifolds in the sense of~\cite{Merk03} is not cofibrant, since the Jacobi identity for the Lie bracket is suppose to hold strictly, not just up to homotopy (this is an inevitable consequence of thinking of the operation $[-,-]$ as of the bracket of vector fields even when some structures are relaxed up to homotopy). 

Our computation confirms that a cofibrant replacement of $\FM$ given by the minimal model is of the ``right shape''; its lowest level with respect to the hierarchy of $L_\infty$-operations recovers the definition from~\cite{Merk03}. This agreement of two results is particularly remarkable in the view of the fact that, unlike Merkulov, we only consider $F$-manifold algebras, and ignore the underlying geometry.

\subsection*{$F$-manifold algebras and pre-Lie commutative algebras}
As a final remark, let us note that there is another peculiar way of constructing $F$-manifold algebras from algebras over operads that are \emph{a priori} unrelated to geometry. Recall the following definition~\cite{Fo15,Mans14}.

\begin{definition}
The operad of pre-Lie commutative algebras is generated by a symmetric binary operation $-\circ-$ and a binary operation $-\cdot-$ without any symmetry satisfying the following relations:
\begin{gather*}
(a_1\circ a_2)\circ a_3=a_1\circ(a_2\circ a_3),\\
(a_1\cdot a_2)\cdot a_3 - a_1\cdot (a_2\cdot a_3) = (a_1\cdot a_3)\cdot a_2- a_1\cdot (a_3\cdot a_2) ,\\
(a_1\circ a_2)\cdot a_3=(a_1\cdot a_3)\circ a_2 + a_1\circ (a_2\cdot a_3).
\end{gather*}
\end{definition}

It turns out that every pre-Lie commutative algebra has a canonical structure of an $F$-manifold algebra.

\begin{proposition}
In any pre-Lie commutative algebra, the product $\circ$ and the bracket $[a_1,a_2]=a_1\cdot a_2-a_2\cdot a_1$ satisfy the $F$-manifold algebra identities. 
\end{proposition}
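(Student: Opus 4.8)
The plan is to verify the three defining relations of $\FM$ — associativity of $\circ$, the Jacobi identity for $[-,-]$, and the Hertling--Manin relation — directly from the three defining relations of a pre-Lie commutative algebra. The first relation, associativity of $\circ$, is literally one of the axioms, so nothing is to be done there. For the Jacobi identity, I would invoke the standard fact, recalled in the introduction of this paper, that the antisymmetrisation $[a_1,a_2]=a_1\cdot a_2-a_2\cdot a_1$ of any (right) pre-Lie product satisfies the Jacobi identity; since the second axiom says exactly that $-\cdot-$ is pre-Lie, this is immediate and needs no use of the mixed relation.

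The substantive step is the Hertling--Manin relation. Here I would treat the mixed axiom $(a_1\circ a_2)\cdot a_3=(a_1\cdot a_3)\circ a_2+a_1\circ(a_2\cdot a_3)$ as a rewriting rule that moves a $\cdot$-operation applied on the outside of a $\circ$-product inward, at the cost of splitting into two terms. Starting from $[a_1\circ a_2,\,a_3\circ a_4]=(a_1\circ a_2)\cdot(a_3\circ a_4)-(a_3\circ a_4)\cdot(a_1\circ a_2)$, I would expand each of the two terms using the mixed axiom (in the first term with $a_1\circ a_2$ in the left slot, in the second with $a_3\circ a_4$ in the left slot), then iterate: the inner sub-expressions such as $(a_1\cdot(a_3\circ a_4))$ and $a_1\circ(a_2\cdot(a_3\circ a_4))$ still contain a $\cdot$ sitting outside a $\circ$, so I apply the mixed axiom again to each. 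After fully reducing every term so that no $\cdot$ is applied to a $\circ$-product, all summands become either $\circ$ of a $\cdot$-term, or $\circ$ of $\circ$-terms multiplied via $\cdot$ with single generators — and these combine, using only commutativity and associativity of $\circ$ and the definition of $[-,-]$, into the right-hand side of the Hertling--Manin relation. I would present this as a short displayed chain of equalities rather than grinding through all sign bookkeeping.

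The main obstacle I anticipate is purely organisational: the repeated application of the mixed axiom generates on the order of a dozen terms, and one must group them correctly as $\circ$-products of the various $[-,-]$-expressions appearing in the Hertling--Manin relation (for instance recognising $a_1\circ(a_2\cdot(a_3\circ a_4))-a_1\circ((a_3\circ a_4)\cdot a_2)$ as $a_1\circ[a_2,a_3\circ a_4]$ after using the mixed axiom to rewrite $(a_3\circ a_4)\cdot a_2$). A clean way to control this is to first record, as a lemma, the ``derived'' identities obtained by applying the mixed axiom to $(a_1\circ a_2)\cdot a_3$ and then to its pieces — effectively computing $P_{a_1\circ a_2}(a_3,a_4)$ and $a_i\circ P_{a_j}(a_3,a_4)$ in terms of the generators — and then observe that the Hertling--Manin relation is an immediate consequence. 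One should also double-check that the computation uses the pre-Lie axiom for $-\cdot-$ only through the Jacobi identity it implies, or possibly not at all for the Hertling--Manin part; either way, since everything takes place in the free pre-Lie commutative algebra on four generators, it is a finite check that can in principle be confirmed by a symbolic computation as a sanity test.
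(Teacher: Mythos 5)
Your proposal is correct and follows essentially the same route as the paper: the paper also dispatches associativity and Jacobi as immediate, and handles Hertling--Manin exactly via the ``clean way'' you describe, namely first computing $P_{a_1}(a_2,a_3)=a_1\cdot(a_2\circ a_3)-(a_1\cdot a_2)\circ a_3-(a_1\cdot a_3)\circ a_2$ from the mixed axiom and then expanding $P_{a_1\circ a_2}(a_3,a_4)$ to recognise $P_{a_1}(a_3,a_4)\circ a_2+P_{a_2}(a_3,a_4)\circ a_1$. Your side remark is also borne out: the pre-Lie identity for $-\cdot-$ is not used at all in the Hertling--Manin verification, only the mixed relation and the commutativity and associativity of $\circ$.
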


\begin{proof}
Associativity and the Jacobi identity are obvious, so we just need to verify the Hertling--Manin condition. Note that in a pre-Lie commutative algebra, we have
\begin{multline*}
P_{a_1}(a_2,a_3)=[a_1,a_2\circ a_3]-[a_1,a_2]\circ a_3-[a_1,a_3]\circ a_2=\\
a_1\cdot (a_2\circ a_3)-(a_2\circ a_3)\cdot a_1-(a_1\cdot a_2-a_2\cdot a_1)\circ a_3-(a_1\cdot a_3-a_3\cdot a_1)\circ a_2=\\
a_1\cdot (a_2\circ a_3)-(a_1\cdot a_2)\circ a_3-(a_1\cdot a_3)\circ a_2.
\end{multline*}
From this, the Hertling--Manin condition is quite easy to see:
\begin{multline*}
P_{a_1\circ a_2}(a_3,a_4)=(a_1\circ a_2)\cdot (a_3\circ a_4)-((a_1\circ a_2)\cdot a_3)\circ a_4-((a_1\circ a_2)\cdot a_4)\circ a_3=\\
(a_1\cdot (a_3\circ a_4))\circ a_2+(a_2\cdot (a_3\circ a_4))\circ a_1-((a_1\cdot a_3)\circ a_2 + a_1\circ (a_2\cdot a_3))\circ a_4-\\
((a_1\cdot a_4)\circ a_2 + a_1\circ (a_2\cdot a_4))\circ a_3=(a_1\cdot (a_3\circ a_4)-(a_1\cdot a_3)\circ a_4 -(a_1\cdot a_4)\circ a_3)\circ a_2+\\
(a_2\cdot (a_3\circ a_4)- (a_2\cdot a_3)\circ a_4 -(a_2\cdot a_4)\circ a_3)\circ a_1=P_{a_1}(a_3,a_4)\circ a_2+P_{a_2}(a_3,a_4)\circ a_1,
\end{multline*}
as required.
\end{proof}
We conjecture that this construction embeds the operad  $\FM$ into the operad of pre-Lie commutative algebras, which may lead to further ways of studying it.

\bibliographystyle{amsplain}
\providecommand{\bysame}{\leavevmode\hbox to3em{\hrulefill}\thinspace}

\end{document}